\title{\sc Two-Sided A Posteriori Error Bounds for Electro-Magneto Static Problems}
\def\shorttitle{Two-Sided A Posteriori Error Bounds for Electro-Magneto Static Problems}
\def\pauthor{Dirk Pauly and Sergey Repin}
\def\mylabelonoff{off}
\def\allowdisbrk{no}
\author{{\sf\pauthor}}
\markboth{\pauthor}{\shorttitle}
\numberwithin{equation}{section}
\newenvironment{acknow}{{\vspace*{1cm}\noindent\bf Acknowledgements }}{}
\newcommand{\bewboxw}{\mbox{}\hfill $\square$ \\}
\newenvironment{proof}{{\noindent\bf Proof }}{\bewboxw}
\newcommand{\keywords}[1]{{\noindent\bf Key Words }#1}
\newcommand{\amsclass}[1]{{\noindent\bf AMS MSC-Classifications }#1}
\newcommand{\mylabel}[1]{\label{#1}\fbox{{\rm #1}}}}{\newcommand{\mylabel}[1]{\label{#1}\makebox[0mm][]{}}}
\newcommand{\paper}[7]{\bibitem{#1} #2, `#3', {\it #4}, #5, (#6), #7.}
\newcommand{\paperpub}[5]{\bibitem{#1} #2, `#3', {\it #4}, (#5), accepted for publication.}
\newcommand{\book}[6]{\bibitem{#1} #2, {\it #3}, #4, #5, (#6).}
\newcommand{\dissavail}[6]{\bibitem{#1} #2, `#3', {\sf Dissertation}, #4, (#5), available from {\tt #6}.}
\newcommand{\repjyu}[8]{\bibitem{#1} #2, `#3', {\it Reports of the Department of Mathematical Information Technology}, 
University of Jyv\"askyl\"a, Series #4. Scientific Computing, No. #4. #5/#6, ISBN #7, ISSN #8.}
\newcommand{\ol}{\overline}
\newcommand{\nz}{\mathbb{N}}
\newcommand{\rz}{\mathbb{R}}
\newcommand{\rt}{\rz^3}
\DeclareMathOperator{\grad}{grad}
\DeclareMathOperator{\curl}{curl}
\DeclareMathOperator{\curls}{\curl_{s}}
\renewcommand{\div}{\operatorname{div}}
\DeclareMathOperator{\divs}{\div_{s}}
\newcommand{\eps}{\varepsilon}
\newcommand{\om}{\Omega}
\newcommand{\trg}{\tau_\gamma}
\newcommand{\ttrg}{\tau_{t,\gamma}}
\newcommand{\tttrg}{\tilde{\tau}_{t,\gamma}}
\newcommand{\textg}{\check{\tau}_{t,\gamma}}
\newcommand{\Eg}{E_{\gamma}}
\def\Set#1#2{\left\{#1\,\mid\,#2\right\}}
\DeclareMathOperator{\Lebesgue}{\mathsf{L}}
\newcommand{\Lgen}[2]{\Lebesgue^{#1}_{#2}}
\def\Lt{\Lgen{2}{}}
\def\Ltom{\Lt(\om)}
\DeclareMathOperator{\Sobolev}{\mathsf{H}}
\newcommand{\Hgen}[3]{\overset{#3}{\Sobolev}{}^{#1}_{#2}}
\newcommand{\Htmotcg}{\Hgen{-1/2}{t}{}(\curls,\gamma)}
\newcommand{\Htmotdg}{\Hgen{-1/2}{t}{}(\divs,\gamma)}
\def\Hoz{\Hgen{1}{}{\circ}}
\def\Hozom{\Hoz(\om)}
\DeclareMathOperator{\Cont}{\mathsf{C}}
\newcommand{\Cgen}[2]{\overset{#2}{\Cont}{}^{#1}}
\def\Ciz{\Cgen{\infty}{\circ}}
\def\Cizom{\Ciz(\om)}
\newcommand{\harmdieps}{\mathcal{H}_{D,\eps}(\om)}
\newcommand{\harmnemu}{\mathcal{H}_{N,\mu}(\om)}
\newcommand{\Hggenom}[2]{\Sobolev_{#1}(\grad^{#2},\om)}
\newcommand{\Hgcom}[1]{\Hggenom{#1}{\circ}}
\newcommand{\Hcgenom}[3]{\Sobolev_{#1}(\curl^{#2}_{#3},\om)}
\newcommand{\Hcom}[1]{\Hcgenom{#1}{}{}}
\newcommand{\Hccom}[1]{\Hcgenom{#1}{\circ}{}}
\newcommand{\Hczom}[1]{\Hcgenom{#1}{}{0}}
\newcommand{\Hcczom}[1]{\Hcgenom{#1}{\circ}{0}}
\newcommand{\Hdgenom}[3]{\Sobolev_{#1}(\div^{#2}_{#3},\om)}
\newcommand{\Hdom}[1]{\Hdgenom{#1}{}{}}
\newcommand{\Hdzom}[1]{\Hdgenom{#1}{}{0}}
\newcommand{\Hdczom}[1]{\Hdgenom{#1}{\circ}{0}}
\newcommand{\Hcdzbotom}[1]{\Sobolev_{#1}(\curl,\div_{0}\eps,\bot_{\eps},\om)}
\newcommand{\Hccdzbotom}[1]{\Sobolev_{#1}(\curl^{\circ},\div_{0}\eps,\bot_{\eps},\om)}
\newcommand{\Hdzbotom}[1]{\Sobolev_{#1}(\div_{0},\bot,\om)}
\newcommand{\Hdzbotepsom}[1]{\Sobolev_{#1}(\div_{0}\eps,\bot_{\eps},\om)}
\newcommand{\Hcdepsom}[1]{\Sobolev_{#1}(\curl,\div\eps,\om)}
\newcommand{\Hccdepsom}[1]{\Sobolev_{#1}(\curl^{\circ},\div\eps,\om)}
\newcommand{\Hom}[1]{\Hgen{}{#1}{}(\om)}
\newcommand{\normdst}{\hspace{-0.4ex}}
\newcommand{\scp}[2]{\left\langle#1,#2\right\rangle}
\newcommand{\scpom}[2]{\scp{#1}{#2}_{\om}}
\newcommand{\scpg}[2]{\scp{#1}{#2}_{\gamma}}
\newcommand{\norm}[1]{\left|\normdst\left|#1\right|\normdst\right|}
\newcommand{\normom}[1]{\norm{#1}_{\om}}
\newcommand{\normmuom}[1]{\norm{#1}_{\mu,\om}}
\newcommand{\normmumoom}[1]{\norm{#1}_{\mu^{-1},\om}}
\newcommand{\tnorm}[1]{\left|\normdst\left|\normdst\left|#1\right|\normdst\right|\normdst\right|}
\newcommand{\tnormom}[1]{\tnorm{#1}_{\om}}
\newtheorem{lem}{Lemma}%[section]
\newtheorem{theo}[lem]{Theorem}
\newtheorem{cor}[lem]{Corollary}
\newtheorem{rem}[lem]{Remark}
\newcommand{\addressesdirksergey}{
\vspace*{2cm}
\begin{center}
\begin{tabular}{ll}
{\sf Dirk Pauly} & {\sf Sergey Repin} \\
\\
Universit\"at Duisburg-Essen & V.A. Steklov Mathematical Institute \\
Campus Essen & St. Petersburg Branch \\
Fakult\"at f\"ur Mathematik & \\
Universit\"atsstr. 2 & Fontanka 27 \\
45117 Essen & 191011 St. Petersburg \\
Germany & Russia \\
e-mail: dirk.pauly@uni-due.de & e-mail: repin@pdmi.ras.ru \\
\\
& and\\
\\
& University of Jyv\"askyl\"a \\
& Faculty of Information Technology \\
& Department of \\
& Mathematical Information Technology \\
& P.O. Box 35 (Agora) \\
& FI-40014 Jyv\"askyl\"a \\
& Finland \\
& e-mail: serepin@jyu.fi
\end{tabular}
\end{center}}
\begin{document}

\maketitle{}

\begin{center}
{\sf Dedicated to the anniversary of Prof. Nina Nikolaevna Uraltseva}
\end{center}

\begin{abstract}
This paper is concerned with the derivation of computable and guaranteed
upper and lower bounds of the difference between the exact and the approximate solution
of a boundary value problem for static Maxwell equations.
Our analysis is based upon purely functional argumentation
and does not attract specific properties of an approximation method.
Therefore, the estimates derived in the paper at hand are applicable
to any approximate solution that belongs to the corresponding energy space.
Such estimates (also called error majorants of the functional type)
have been derived earlier for elliptic problems \cite{repinconvex,RdeGruyter}.\\
\keywords{a posteriori error estimates of functional type,
Maxwell's boundary value problem, electro-magneto statics}\\
\amsclass{65 N 15, 78 A 30}
\end{abstract}

\tableofcontents

\section{Introduction and notation}

The main goal of this paper is to derive guaranteed and computable
upper and lower bounds of the difference between the exact solution
of an electro-magneto static boundary value problem and any approximation
from the corresponding energy space. We discuss the method with the paradigm
of a prototypical electro-magneto static problem in a bounded domain.
The generalized formulation is given by the integral identity \eqref{varformu}.
We show that (as in many other problems of mathematical physics) certain transformations
of \eqref{varformu} lead to guaranteed and fully computable majorants and
minorants of the approximation error. However, the case considered here
has special features that make (at some points) the derivation procedure
different from, e.g., that which has been earlier applied to other elliptic type problems.
This happens because the corresponding differential operator has a nonzero
kernel (which contains curl-free vector fields) and the set
of trial functions in \eqref{varformu} is restricted to a rather special
subspace. For these reasons, the derivation of the estimates is based on
Helmholtz-Weyl decompositions of vector fields, 
orthogonal projections onto subspaces, 
and on a certain version of a Poincar\'e-Friedrich
estimate for the differential operator $\curl$.
First, we show that the distance between the exact solution $E$
and the approximate solution $\tilde{E}$ (measured throughout the semi-norm
generated by the operator $\curl$) is equal to some norm of the so-called
{\it residual functional} $\ell_{\tilde{E}}$ (cf. \eqref{stepone}). 
If $\tilde{E}$ satisfies the boundary condition exactly,
i.e. $\ttrg\tilde{E}=G$, then the latter functional vanishes 
if and only if $\curl\tilde{E}$ coincides with $\curl E$.
Lemma \ref{estimatelem} shows that an error majorant can be expressed 
throughout a certain norm of $\ell_{\tilde{E}}$ (cf. \eqref{estimatetheoestone}). 
However, in general, computing
of this norm is hardly possible because it requires finding a supremum 
over an infinite number of vector fields.

Theorem \ref{esttwo} provides a computable form of the upper bound. 
The corresponding estimate \eqref{estprotwo} 
shows that the error majorant is the sum of five
terms, which can be thought of as penalties for possible violations
of the relations \eqref{curlcurleq}-\eqref{orthdi}.
It contains only known vector fields and global constants 
depending on geometrical properties of the domain.
Moreover, it is easy to see that the upper bound vanishes
if and only if $\tilde{E}$ coincides
with the exact solution $E$ and a 'free variable' $Y$
encompassed in the estimate coincides with $\mu^{-1}\curl\tilde{E}$.
Also, we show that the estimates derived are sharp in the sense that the
estimates \eqref{estprothree} and \eqref{estproonem}
have no irremovable gap between the left and
right hand sides (Remark \ref{estthreeremthree}). 
Finally, in Section \ref{lowboundsec}, we derive lower estimates of the
difference between exact and approximate solutions. The corresponding result
is presented by Theorem \ref{lowerboundtheo}. 
This estimate is also computable, guaranteed and
sharp provided that the approximation exactly satisfies the prescribed boundary condition.

Throughout this paper, we consider a bounded domain
$\Omega\subset\rt$ with Lipschitz continuous boundary $\gamma$ 
and denote the corresponding outward unit normal vector by $n$. 
$E$ and $H$ stand for the
electric and magnetic vector fields, respectively, while $\eps$
and $\mu$ denote positive definite, symmetric matrices with
measurable, bounded coefficients that describe properties of the
media (dielectricity and permeability, respectively). 
For the sake of brevity, matrices (matrix-valued functions) 
with such properties are called 'admissible'. 
We note that the corresponding inverse matrices are
admissible as well. In particular, there exists a constant $c_{\mu}>0$, 
such that for a.e. $x\in\Omega$
\begin{align}
\mylabel{cmu}
c_{\mu}|\xi|^2\leq\mu^{-1}(x)\xi\cdot\xi,\quad \forall\,\xi\in\rt.
\end{align}
By $\Ltom$ we denote the usual scalar $\Lt$-Hilbert space
of square integrable functions over $\Omega$
and by $\Hom{}$ the Hilbert space of real-valued $\Lt$-vector fields, i.e. $\Lt(\Omega,\rt)$.
For the sake of simplicity we restrict our analysis to the case of real valued
functions and vector fields. The generalization to complex valued spaces
is straight forward.

Orthogonality and the
orthogonal sum with respect to the scalar product of $\Hom{}$ is
denoted by $\bot$ and $\oplus$, respectively, i.e. $\Phi\bot\Psi$ if
$$\scpom{\Phi}{\Psi}:=\int_{\Omega}\Phi\cdot\Psi\,d\lambda=0,$$
where $\lambda$ denotes Lebesgue's measure. 
Moreover, by $\bot_\nu$ (respectively
$\oplus_\nu$) we indicate the orthogonality (respectively orthogonal
sum) in terms of the weighted $\Lt$-scalar product
$\scpom{\nu\Phi}{\Psi}$ generated by an admissible matrix $\nu$.

Throughout the paper we will utilize the following
functional spaces:
\begin{align*}
\Hcom{}&:=\Set{\Psi\in\Hom{}}{\curl\Psi\in\Hom{}},\\
\Hczom{}&:=\Set{\Psi\in\Hcom}{\curl\Psi=0},\\
\Hccom{}&:=\ol{\Cizom},\quad\text{closure in }\Hcom{},\\
\Hcczom{}&:=\Hccom{}\cap\Hczom{}.
\end{align*}
Analogously, we define the spaces associated with the operators $\div$ and $\grad$.
Furthermore, we introduce the spaces (containing the so-called Dirichlet and Neumann fields)
\begin{align*}
\harmdieps&:=\Hcczom{}\cap\eps^{-1}\Hdzom{}\\
&\,\,=\Set{\Psi\in\Hom{}}{\curl\Psi=0,\,\div\eps\Psi=0,\,n\times\Psi|_{\gamma}=0},\\
\harmnemu&:=\Hczom{}\cap\mu^{-1}\Hdczom{}\\
&\,\,=\Set{\Psi\in\Hom{}}{\curl\Psi=0,\,\div\mu\Psi=0,\,n\cdot\mu\Psi|_{\gamma}=0}.
\end{align*}
Here and later on we write
$E\in\eps^{-1}\Hdzom{}$ if $\eps E\in\Hdzom{}$.
These are finite dimensional spaces, whose dimensions are denoted
by $d_D$ and $d_N$, respectively.
In fact, these numbers are equal to the so-called Betti numbers
of $\Omega$ and depend only on topological properties of the domain
(for a detailed presentation see \cite{paulystatic}).
A basis of $\harmdieps$ shall be given by special vector fields $\{H_{1},\dots,H_{d_D}\}$.

Finally, we note that being equipped with the proper inner products 
all the above introduced functional spaces are Hilbert spaces.

The classical formulation of the electro-magneto static problem
for a given vector field $F$ (driving force) and given  $\eps$, $\mu$ reads as follows:
Find a magnetic field
$$H\in\Hcom{}\cap\mu^{-1}\Hdczom{}\cap\harmnemu^{\bot_\mu}$$
and a corresponding electric field
$$E\in\Hccdzbotom{}:=\Hccom{}\cap\eps^{-1}\Hdzom{}\cap\harmdieps^{\bot_\eps},$$
such that in $\Omega$
$$\curl H=F,\quad\curl E=\mu H.$$
In other words, the problem is to find vector fields
$H\in\Hcom{}\cap\mu^{-1}\Hdom{}$
and 
$$E\in\Hcdepsom{}:=\Hcom{}\cap\eps^{-1}\Hdom{},$$ 
such that
\begin{align*}
\curl H&=F,&\curl E&=\mu H&&\text{in }\Omega,\\
\div\mu H&=0,&\div\eps E&=0&&\text{in }\Omega,\\
n\cdot\mu H|_\gamma&=0,&n\times E|_\gamma&=0&&\text{on }\gamma,\\
\mu H&\,\,\bot\,\,\harmnemu,&\eps E&\,\,\bot\,\,\harmdieps,
\end{align*}
where the homogeneous boundary conditions are to be understood in the weak sense.

This coupled problem is equivalent to an electro-magneto static Maxwell
problem in second order form,
which in classical terms reads as follows:
Find an electric field
$E$ in $\Hccdzbotom{}$,
such that $\mu^{-1}\curl E$ belongs to $\Hcom{}$ and
$$\curl\mu^{-1}\curl E=F$$
holds in $\Omega$, i.e. find
$E\in\Hcdepsom{}$,
such that $\mu^{-1}\curl E\in\Hcom{}$ and
\begin{align}
\curl\mu^{-1}\curl E&=F&&\text{in }\Omega,\mylabel{hombcprobeqone}\\
\div\eps E&=0&&\text{in }\Omega,\mylabel{hombcprobeqtwo}\\
n\times E|_\gamma&=0&&\text{on }\gamma,\mylabel{hombcprobeqthree}\\
\eps E&\,\,\bot\,\,\harmdieps.\mylabel{hombcprobeqfour}
\end{align}
Once $E$ has been found, the magnetic field is given by $H:=\mu^{-1}\curl E$.

We note that the problem
\begin{align*}
\curl\mu^{-1}\curl E+\kappa^2 E&=F&&\text{in }\Omega,\\
n\times E|_\gamma&=0&&\text{on }\gamma
\end{align*}
with positive $\kappa$ was considered in \cite{Anjam} 
in the context of functional type a posteriori error estimates. 
From the mathematical point of view, this problem
is much simpler as the problem \eqref{hombcprobeqone}-\eqref{hombcprobeqfour}
since the zero order term makes the underlying operator positive definite.

\section{Variational formulation and solution theory}

Henceforth, we consider \eqref{hombcprobeqone}-\eqref{hombcprobeqfour} 
assuming that the boundary condition on $\gamma$ may be inhomogeneous 
(physically, such a condition is motivated by the presence of electric currents on the boundary).
Hence, we intend to discuss the following prototypical
electro-magneto static Maxwell problem in second order form:
Find an electric field 
$E$ in $\Hcdepsom{}$,
such that
\begin{align}
\curl\mu^{-1}\curl E&=F&&\text{in }\Omega,\mylabel{curlcurleq}\\
\div\eps E&=0&&\text{in }\Omega,\mylabel{diveq}\\
n\times E|_\gamma&=G&&\text{on }\gamma,\mylabel{bdcond}\\
\eps E&\,\,\bot\,\harmdieps,\mylabel{orthdi}
\end{align}
i.e., find $E$ in 
$$\Hcdzbotom{}:=\Hcom{}\cap\eps^{-1}\Hdzom{}\cap\harmdieps^{\bot_\eps}$$
satisfying \eqref{curlcurleq} and \eqref{bdcond}.
There are at least two methods to prove existence of the solution.
One is based upon Helmholtz-Weyl decompositions
(see, e.g.
\cite{milani,potential,boundaryelectro,decomposition,paulystatic,paulydeco}).
The second method consists of introducing and studying a suitable generalized
statement  of the problem \eqref{curlcurleq}-\eqref{orthdi}.
In this paper, we use the second method 
because it provides a natural way of deriving error estimates.
Both methods are based on Poincar\'e-Friedrich estimates,
see Remark \ref{poincaregenest},
and (if it is needed) exploit suitable extension operators
for the boundary data.
On this way, we also need a certain
version of the Poincar\'e-Friedrich estimate, namely
\begin{align}
\normom{\Psi}\leq c_{p}\normom{\curl\Psi}\quad\forall\,\Psi\in\Hccdzbotom{}.\mylabel{Poincare}
\end{align}
Of course, there exist more general variants of Poincar\'e-Friedrich's estimate 
\eqref{Poincare} for vector fields. Here, we refer to Remark \ref{poincaregenest}. 

Now, let $\Eg$ be some vector field in $\Hcdzbotom{}$
satisfying the boundary condition \eqref{bdcond} in the generalized sense,
i.e., $E-\Eg\in\Hccom{}$.
The generalized solution
$$E\in\Hccdzbotom{}+\Eg\subset\Hcdzbotom{}$$
of \eqref{curlcurleq}-\eqref{orthdi}
is then defined by the relation
\begin{align}
\scpom{\mu^{-1}\curl E}{\curl W}&=\scpom{F}{W}\quad\forall\,W\in\Hccdzbotom{}.\mylabel{varformu}
\end{align}
If $F\in\Hom{}$ then by the Cauchy-Scharz inequality the right hand
side of \eqref{varformu} is a linear and continuous functional over $\Hccdzbotom{}$. 
By \eqref{Poincare} the left hand side of
\eqref{varformu} is a strongly coercive bilinear form over $\Hccdzbotom{}$. 
Thus, under these assumptions the problem
\eqref{varformu} is uniquely solvable in $\Hccdzbotom{}+\Eg$ by Lax-Milgram's theorem.

First, we note some Helmholtz-Weyl decompositions of $\Hom{}$,
i.e. decompositions into solenoidal and curl-free fields,
which will be used frequently throughout our analysis.

\begin{lem}
\mylabel{helmholtzlem}
$\Hom{}$ can be decomposed as
\begin{align}
\Hom{}
&=\eps\Hcczom{}\oplus_{\eps^{-1}}\ol{\curl\Hcom{}}\nonumber\\
&=\eps\ol{\grad\Hgcom{}}\oplus_{\eps^{-1}}\Hdzom{}\nonumber\\
&=\eps\ol{\grad\Hgcom{}}\oplus_{\eps^{-1}}\eps\harmdieps\oplus_{\eps^{-1}}\ol{\curl\Hcom{}}\mylabel{helmdecoepsmo}
\intertext{and}
\Hom{}
&=\Hcczom{}\oplus_{\eps}\eps^{-1}\ol{\curl\Hcom{}}\nonumber\\
&=\ol{\grad\Hgcom{}}\oplus_{\eps}\eps^{-1}\Hdzom{}\nonumber\\
&=\ol{\grad\Hgcom{}}\oplus_{\eps}\harmdieps\oplus_{\eps}\eps^{-1}\ol{\curl\Hcom{}},
\mylabel{helmdecoeps}
\end{align}
where all closures are taken in $\Hom{}$ and $\Hgcom{}=\Hozom$.
Moreover, 
$$\ol{\curl\Hcom{}}=\Hdzbotom{}:=\Hdzom{}\cap\harmdieps^{\bot}.$$
\end{lem}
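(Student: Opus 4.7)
The strategy is to first establish the coarse two-term decomposition $\Hom{}=\eps\Hcczom{}\oplus_{\eps^{-1}}\ol{\curl\Hcom{}}$, then refine its first summand via an $\eps$-orthogonal split $\Hcczom{}=\ol{\grad\Hgcom{}}\oplus_{\eps}\harmdieps$ into gradients and Dirichlet harmonic fields, and assemble the pieces to obtain \eqref{helmdecoepsmo}. The dual formulas \eqref{helmdecoeps} then follow by rerunning the same argument with the weighted and unweighted inner products interchanged, using that $(\eps X)^{\bot_{\eps^{-1}}}=X^{\bot}$ for any $X\subseteq\Hom{}$.

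For the coarse step, $\eps^{-1}$-orthogonality of $\eps\Hcczom{}$ and $\curl\Hcom{}$ is a one-line integration by parts: for $\phi\in\Hcczom{}$ and $\eta\in\Hcom{}$ one has $\scpom{\eps^{-1}(\eps\phi)}{\curl\eta}=\scpom{\phi}{\curl\eta}=\scpom{\curl\phi}{\eta}=0$, using that $\phi\in\Hccom{}$ has vanishing tangential trace and $\curl\phi=0$. To upgrade this to a full direct-sum decomposition I would characterize the $\eps^{-1}$-orthogonal complement of the closed subspace $\ol{\curl\Hcom{}}$ in $\Hom{}$: a field $x$ lies there iff $\scpom{\eps^{-1}x}{\curl\eta}=0$ for every $\eta\in\Hcom{}$, which is exactly the weak statement $\curl(\eps^{-1}x)=0$ together with vanishing tangential trace, i.e.\ $\eps^{-1}x\in\Hcczom{}$.

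For the refinement of $\Hcczom{}$, the $\eps$-orthogonality $\scpom{\eps\grad u}{h}=-\scpom{u}{\div\eps h}=0$ for $u\in\Hgcom{}$ and $h\in\harmdieps$ is again integration by parts. Given $\phi\in\Hcczom{}$, Lax-Milgram applied to the variational problem $\scpom{\eps\grad u}{\grad v}=\scpom{\eps\phi}{\grad v}$ for all $v\in\Hgcom{}$ (coercive by the Poincar\'e inequality on $\Hozom$ and admissibility of $\eps$) produces a unique $u\in\Hgcom{}$, and the residual $h:=\phi-\grad u$ then lies in $\Hcczom{}$ and satisfies $\div\eps h=0$ weakly, so $h\in\harmdieps$. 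Combining with the coarse step delivers \eqref{helmdecoepsmo}. Finally, the identity $\ol{\curl\Hcom{}}=\Hdzom{}\cap\harmdieps^{\bot}$ (unweighted $L^2$) follows by taking orthogonal complements: the inclusion $\subset$ uses $\div\curl=0$ and $\scpom{\curl\eta}{h}=\scpom{\eta}{\curl h}=0$ for $h\in\harmdieps\subset\Hccom{}$; conversely, a field $v\in\Hdzom{}\cap\harmdieps^{\bot}$ paired against any $\phi=\grad u+h\in\Hcczom{}$ gives $\scpom{v}{\grad u}=0$ by $\div v=0$ with $u\in\Hozom$ and $\scpom{v}{h}=0$ by hypothesis, so $v\in(\Hcczom{})^{\bot}=\ol{\curl\Hcom{}}$.

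\textbf{Main obstacle.} The only non-routine technical point is the weak characterization used to identify the annihilator of $\curl\Hcom{}$: a weakly curl-free vector field that is $L^2$-orthogonal to $\curl\Cizom$ must in fact belong to $\Hccom{}=\ol{\Cizom}$ in the $\Hcom{}$-graph norm. This is a standard but nontrivial consequence of the Lipschitz regularity of $\gamma$ and the density definition of $\Hccom{}$, and is precisely the ingredient for which one would invoke the results cited in \cite{paulystatic}.
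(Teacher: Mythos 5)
The paper never proves Lemma \ref{helmholtzlem}; it is quoted as known from the references on Helmholtz--Weyl decompositions (\cite{potential,decomposition,paulystatic,paulydeco}), so there is no in-paper argument to compare against. Your sketch is the standard projection-theorem proof that those references use, and its main line is correct: $\eps^{-1}$-orthogonality of the summands by integration by parts, identification of the $\eps^{-1}$-orthogonal complement of $\ol{\curl\Hcom{}}$ as $\eps\Hcczom{}$, the Lax--Milgram splitting $\Hcczom{}=\ol{\grad\Hgcom{}}\oplus_{\eps}\harmdieps$, and passage between \eqref{helmdecoepsmo} and \eqref{helmdecoeps} by applying $\eps^{-1}$.

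Two points deserve attention. First, your ``main obstacle'' paragraph is mis-stated, and as written it is false: a weakly curl-free field is \emph{automatically} $L^2$-orthogonal to $\curl\Cizom$ (that is precisely the definition of the weak curl being zero), so orthogonality to $\curl\Cizom$ carries no boundary information and cannot force membership in $\Hccom{}$. What your main argument actually uses --- correctly --- is orthogonality to $\curl\eta$ for \emph{all} $\eta\in\Hcom{}$, i.e. against the maximal domain. Moreover, that characterization needs no Lipschitz regularity: if $A$ denotes $\curl$ with domain $\Cizom$, then $A^{*}=\curl$ with domain $\Hcom{}$ and $\ol{A}=A^{**}$, so $u\in\Hccom{}$ with $\curl u=0$ if and only if $\scpom{u}{\curl\eta}=0$ for all $\eta\in\Hcom{}$; this is pure functional analysis, valid for arbitrary open sets (the Lipschitz assumption matters elsewhere in the paper, e.g. for the trace operators and the compactness argument behind \eqref{Poincare}). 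Second, you never derive the middle line $\Hom{}=\eps\ol{\grad\Hgcom{}}\oplus_{\eps^{-1}}\Hdzom{}$. It does follow from your three-term decomposition once you check $\Hdzom{}=\eps\harmdieps\oplus_{\eps^{-1}}\ol{\curl\Hcom{}}$: both summands are divergence-free, and if $x\in\Hdzom{}$ has gradient component $\eps\grad u$ in the three-term splitting, then $\div\eps\grad u=0$, and testing with $u\in\Hgcom{}$ gives $\grad u=0$. This step should be stated, but it is routine; the proposal as a whole is sound.
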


\begin{rem}
\mylabel{helmholtzrem}
Let us denote the $\eps$-orthogonal projection onto $\eps^{-1}\ol{\curl\Hcom{}}$ 
in \eqref{helmdecoeps} by $\pi$. Then we have for all $\Phi\in\Hcom{}$
\begin{align}
\ttrg\pi\Phi=\ttrg\Phi,\quad\curl\pi\Phi=\curl\Phi\mylabel{pipropo}
\end{align}
and for all $\Psi\in\Hom{}$
\begin{align*}
\div\eps\pi\Psi=0,\quad\eps\pi\Psi\bot\harmdieps,\quad
\curl(1-\pi)\Psi=0,\quad\ttrg(1-\pi)\Psi=0.
\end{align*}
The latter line can be written in a more compact and precise way as
$$\pi\Hom{}=\eps^{-1}\ol{\curl\Hcom{}}=\Hdzbotepsom{},\quad(1-\pi)\Hom{}=\Hcczom{}.$$
\end{rem}

\begin{rem}
\mylabel{projectionrem}
Note that by \eqref{curlcurleq} $F$ must be solenoidal and
perpendicular in $\Hom{}$ to $\Hcczom{}$. 
Using the Helmholtz-Weyl decomposition \eqref{helmdecoepsmo}
we decompose the vector field $F\in\Hom{}$, 
i.e. $F=\eps F_{D}+\eps F_{\grad}+F_{\curl}$. 
Then, for any $W\in\Hccdzbotom{}$ we compute
$\scpom{F}{W}=\scpom{F_{\curl}}{W}$. Hence, the functional on the
right hand side of \eqref{varformu} can not distinguish between $F$
and the projection $F_{\curl}$. 
\end{rem}

The following theorem states the main existence result.

\begin{theo}
\mylabel{solutiontheo}
Let $F\in\Hdzbotom{}$ and let $\Eg\in\Hcdzbotom{}$ satisfy the boundary condition \eqref{bdcond}.
Then the boundary value problem \eqref{curlcurleq}-\eqref{orthdi}
is uniquely weakly solvable in $\Hccdzbotom{}+\Eg$. The solution operator is continuous.
\end{theo}

\begin{rem}
\mylabel{solutiontheorem}
The kernel of \eqref{curlcurleq}-\eqref{bdcond} equals $\harmdieps$.
We only have to show $\curl E=0$ but this follows
immediately since $E\in\Hccom{}$ and thus,
$$0=\scpom{\curl\mu^{-1}\curl E}{E}=\scpom{\mu^{-1}\curl E}{\curl E}.$$
\end{rem}

\begin{rem}
\mylabel{traceextension}
The boundary data $G$ and its extension $\Eg$ can be described in more detail.
Since the papers \cite{alonsovalli,buffaciarlet,buffacostabelsheen}
and the more general paper of Weck \cite{wecklip} we know that
even for Lipschitz domains, where the non scalar trace business is a challenging task,
there exist a bounded linear tangential trace operator $\ttrg$
and a corresponding bounded linear tangential extension operator $\textg$ (right inverse)
mapping $\Hcom{}$ to special tangential vector fields on the boundary, i.e.
$$\Htmotcg:=\Set{\psi\in\Hgen{-1/2}{t}{}(\gamma)}{\curls\psi\in\Hgen{-1/2}{}{}(\gamma)},$$
and vice verse.
Here, $\curls$ denotes the surface $\curl$.
Using the Helmholtz-Weyl decomposition \eqref{helmdecoeps}
we even get an improved extension operator. We have
\begin{align*}
\ttrg:\Hcom{}&\to\Htmotcg,\\
\textg:\Htmotcg&\to\Hcdzbotom{}.
\end{align*}
Applied to smooth vector fields we have $\ttrg=n\times\,\cdot\,|_{\gamma}$.
Now, we may specify the boundary data $G\in\Htmotcg$
and the extension $\Eg:=\textg G\in\Hcdzbotom{}$
as well as our variational formulation for $E=E^\circ+\textg G$:
Find $E^\circ\in\Hccdzbotom{}$, such that
$$b(E^\circ,W):=\scpom{\mu^{-1}\curl E^\circ}{\curl W}
=\scpom{F}{W}-\scpom{\mu^{-1}\curl\textg G}{\curl W}=:\ell(W)$$
holds for all $W\in\Hccdzbotom{}$.
\end{rem}

\begin{rem}
\mylabel{traceextensiontwo}
Henceforth, we assume that $G$ is given by a tangential trace of
some vector field $\check{G}\in\Hcom{}$.
\end{rem}

\begin{rem}
\mylabel{poincaregenest}
More general variants of the Poincar\'e-Friedrich estimate for vector fields
\eqref{Poincare} are known. For instance, we have
\begin{align*}
\normom{\Psi}
\leq c_{p}\left(\normom{\curl\Psi}
+\normom{\div\eps\Psi}
+\norm{\ttrg\Psi}_{\Htmotcg}
+\sum_{n=1}^{d_{D}}\left|\scpom{\eps\Psi}{H_{n}}\right|\right),
\end{align*}
which holds for all $\Psi\in\Hcdepsom{}$.
This estimate may be proved by an indirect argument using
a 'Maxwell compact embedding property' of $\Omega$,
which holds true not only for Lipschitz domains,
but also, if the homogeneous boundary condition is considered,
for more irregular domains (cone properties), see \cite{xmas}.
For inhomogeneous boundary conditions
the Lipschitz assumption can not be weakened.
Actually, it is just the continuity of the solution operator
of the corresponding electro static boundary value problem,
see \cite{peter,kuhnpaulytracerep,kuhnpaulyreg}.
\end{rem}

\section{Upper bounds for the deviation from the exact solution}

Let $\tilde{E}$ be an approximation of
$E\in\Hccdzbotom{}+\Eg\subset\Hcdzbotom{}$. 
We assume that $\tilde{E}$ belongs
to $\Hcdepsom{}$, which means that, in general, the boundary
condition, the divergence-free condition, and the orthogonality to
the Dirichlet fields might be violated, i.e. the approximation field
may be such that
$$\ttrg\tilde{E}\neq G,\quad\div\eps\tilde{E}\neq0,
\quad\scp{\eps\tilde{E}}{H}\neq0\quad\text{for some }H\in\harmdieps.$$
Moreover, for the subsequent analysis 
and then also for the numerical application, which is even more important, 
it is sufficient to assume just $\tilde{E}\in\Hcom{}$.

Our goal is to obtain upper bounds for the difference
between $\curl E$ and $\curl\tilde{E}$ in terms of the weighted norm
$$\normmumoom{\Psi}:=\normom{\mu^{-1/2}\Psi}=\scpom{\mu^{-1}\Psi}{\Psi}^{1/2}.$$
First, we use \eqref{varformu} and get for all $W\in\Hccdzbotom{}$
\begin{align}
\scpom{\mu^{-1}\curl(E-\tilde{E})}{\curl W}
&=\scpom{F}{W}-\scpom{\mu^{-1}\curl\tilde{E}}{\curl W}=:\ell_{\tilde{E}}(W),\mylabel{stepone}
\end{align}
where $\ell_{\tilde{E}}$ is a linear and continuous functional over $\Hccdzbotom{}$
as well as over $\Hcom{}$.
Furthermore, $\ell_{\tilde{E}}$ does not depend on the exact solution $E$. 

\begin{rem}
\mylabel{remell}
Obviously, $\ell_{\tilde{E}}$ vanishes if $\curl E=\curl\tilde{E}$.
Furthermore, if $\tilde{E}$ satisfies the boundary condition exactly,
i.e. $\ttrg\tilde{E}=G$, then $\ell_{\tilde{E}}=0$ if and only if
$\curl E=\curl\tilde{E}$ (or what is equivalent if and only if $E=\pi\tilde{E}$). 
This holds by the following argument using the Helmholtz-Weyl decomposition:
If $\ttrg\tilde{E}=G$ then $E-\pi\tilde{E}\in\Hccdzbotom{}$. 
Thus $\curl(E-\pi\tilde{E})=0$ by $\ell_{\tilde{E}}=0$. 
But then $E-\pi\tilde{E}$ is a Dirichlet field and hence must vanish by orthogonality. 
Finally $\curl\pi\tilde{E}=\curl\tilde{E}$.
\end{rem}

The second step is based upon the following result:

\begin{lem}
\mylabel{estimatelem}
Let $E\in\Hcdzbotom{}$ be the exact solution and $\tilde{E}\in\Hcom{}$
be an approximation. Furthermore, let $\ell_{\tilde{E}}$ be as above
and let $c_\ell>0$ exist, such that
$$\scpom{\mu^{-1}\curl(E-\tilde{E})}{\curl W}
=\ell_{\tilde{E}}(W)\leq c_\ell\normmumoom{\curl W}$$
holds for all $W\in\Hccdzbotom{}$. Then
\begin{align}
\normmumoom{\curl(E-\tilde{E})}&\leq c_\ell+2\normmumoom{\curl T}\mylabel{estimatetheoestone}
\intertext{holds for all $T\in\Hcom{}$, for which the tangential trace
coincides with the tangential trace of $E-\tilde{E}$, i.e. $G-\ttrg\tilde{E}$,
on the boundary $\gamma$.
If additionally $\ttrg\tilde{E}=G$ then}
\normmumoom{\curl(E-\tilde{E})}&\leq c_\ell.\mylabel{estimatetheoesttwo}
\end{align}
\end{lem}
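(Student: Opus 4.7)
The plan is to reduce the inequality to the hypothesis by producing a suitable test function in $\Hccdzbotom{}$ out of the residual $E - \tilde{E}$. The natural candidate $E - \tilde{E} - T$ lies in $\Hccom{}$ (since by assumption $\ttrg T = G - \ttrg\tilde{E} = \ttrg(E-\tilde{E})$), but it need not be solenoidal with respect to $\eps$ nor orthogonal to $\harmdieps$. To repair this, I would apply the $\eps$-orthogonal projection $\pi$ from Remark~\ref{helmholtzrem} and set
$$W := \pi(E - \tilde{E} - T).$$
By \eqref{pipropo} together with $\pi\Hom{}=\Hdzbotepsom{}$, the field $W$ inherits vanishing tangential trace from $E-\tilde{E}-T\in\Hccom{}$ and is automatically divergence-free and $\eps$-orthogonal to $\harmdieps$, so $W\in\Hccdzbotom{}$. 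Crucially, $\curl W = \curl(E-\tilde{E}-T)=\curl(E-\tilde{E})-\curl T$.

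The second step is pure algebra. Testing the assumed inequality with this $W$ and rewriting $\curl(E-\tilde{E})=\curl W + \curl T$ gives
$$\normmumoom{\curl W}^{2}
=\scpom{\mu^{-1}\curl(E-\tilde{E})}{\curl W}-\scpom{\mu^{-1}\curl T}{\curl W}
\le c_\ell\normmumoom{\curl W}+\normmumoom{\curl T}\normmumoom{\curl W},$$
after using the hypothesis on the first term and weighted Cauchy--Schwarz on the second. Dividing by $\normmumoom{\curl W}$ (trivial if it vanishes) yields $\normmumoom{\curl W}\le c_\ell+\normmumoom{\curl T}$, and the triangle inequality applied to $\curl(E-\tilde E)=\curl W+\curl T$ delivers \eqref{estimatetheoestone}. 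For \eqref{estimatetheoesttwo} the hypothesis $\ttrg\tilde{E}=G$ places $E-\tilde{E}$ itself in $\Hccom{}$, so one may take $T=0$; the same computation collapses to $\normmumoom{\curl(E-\tilde{E})}\le c_\ell$.

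The only delicate point is the first step, namely the need to produce a legitimate test function. Subtracting $T$ fixes the tangential trace but nothing else, and $E-\tilde{E}-T$ generally fails the side constraints in $\Hccdzbotom{}$; the Helmholtz--Weyl decomposition via $\pi$ is precisely what restores membership in the admissible test space without altering the curl. Everything else reduces to a one-variable quadratic bound, which is why I expect no further obstacle.
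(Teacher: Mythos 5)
Your proof is correct and is essentially the paper's own argument: your test function $W=\pi(E-\tilde E-T)$ coincides with the paper's $W=E-\pi(T+\tilde E)$ because $\pi E=E$ (as $E\in\Hcdzbotom{}$ lies in the range of $\pi$), and the subsequent quadratic/Cauchy--Schwarz/triangle-inequality steps are identical. Nothing further is needed.
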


\begin{proof}
We use the Helmholtz-Weyl decomposition \eqref{helmdecoeps}
and the projection $\pi$ from Remark \ref{helmholtzrem}.
We consider a vector field $T\in\Hcom{}$ with
$\ttrg T=G-\ttrg\tilde{E}$ and define the vector field
$$W:=E-\pi(T+\tilde{E})=E-\tilde{E}+
(1-\pi)\tilde{E}-\pi T\in\Hccdzbotom{},$$
which holds by \eqref{pipropo}.
Hence, $\curl W=\curl(E-\tilde{E})-\curl T$.
Using Cauchy-Schwarz' inequality we obtain
\begin{align*}
\normmumoom{\curl W}^2
&=\scpom{\mu^{-1}\curl(E-\tilde{E})}{\curl W}-\scpom{\mu^{-1}\curl T}{\curl W}\\
&\leq\left(c_\ell+\normmumoom{\curl T}\right)\normmumoom{\curl W}
\end{align*}
and thus $\normmumoom{\curl W}\leq c_\ell+\normmumoom{\curl T}$.
By the triangle inequality we get \eqref{estimatetheoestone}.
\eqref{estimatetheoesttwo} is trivial setting $T:=0$.
\end{proof}

Using the trace and extension operators from
Remark \ref{traceextension} we obtain the following result:

\begin{cor}
\mylabel{estimatecor}
Let the assumptions of Lemma \ref{estimatelem} be satisfied. Then
\begin{align}
\begin{split}
\normmumoom{\curl(E-\tilde{E})}&\leq c_\ell+2\normmumoom{\curl\textg(G-\ttrg\tilde{E})}\\
&\leq c_\ell+2c_\gamma\norm{G-\ttrg\tilde{E}}_{\Htmotcg}.
\end{split}\mylabel{estcurltrace}
\end{align}
Here $c_\gamma>0$ is the constant in the inequality
\begin{align}
\normmumoom{\curl\textg\psi}\leq c_\gamma\norm{\psi}_{\Htmotcg}
\quad\forall\psi\in\Htmotcg.\mylabel{extensionineq}
\end{align}
\end{cor}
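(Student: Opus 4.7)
The corollary is a direct specialization of Lemma \ref{estimatelem}, obtained by picking the free auxiliary vector field $T$ to be the canonical tangential extension of the boundary mismatch. The whole content is really just packaging the extension operator $\textg$ from Remark \ref{traceextension} into the abstract bound \eqref{estimatetheoestone}, so I do not expect a genuine obstacle — the only thing that needs care is checking that $\textg$ is actually applicable to $G-\ttrg\tilde{E}$ and that the induced constant $c_\gamma$ is meaningful.

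First I would verify that $G-\ttrg\tilde{E}\in\Htmotcg$ so that $\textg$ may be applied. By Remark \ref{traceextensiontwo} we have $G=\ttrg\check{G}$ for some $\check{G}\in\Hcom{}$, and by Remark \ref{traceextension} the tangential trace $\ttrg$ maps $\Hcom{}$ continuously into $\Htmotcg$; since $\tilde{E}\in\Hcom{}$ as well, both $G$ and $\ttrg\tilde{E}$ lie in $\Htmotcg$, whence their difference does too.

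Next I would set $T:=\textg(G-\ttrg\tilde{E})$. By Remark \ref{traceextension}, $\textg$ is a bounded right inverse of $\ttrg$ mapping into $\Hcdzbotom{}\subset\Hcom{}$, so in particular $\ttrg T = G-\ttrg\tilde{E}$, meaning $T$ satisfies the hypothesis of Lemma \ref{estimatelem}. Inserting this $T$ into \eqref{estimatetheoestone} immediately yields the first inequality of \eqref{estcurltrace}.

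Finally, the second inequality of \eqref{estcurltrace} follows by applying the auxiliary bound \eqref{extensionineq} to the choice $\psi:=G-\ttrg\tilde{E}\in\Htmotcg$. The validity of \eqref{extensionineq} itself — i.e.\ the existence of a finite constant $c_\gamma$ — is a routine consequence of the continuity of $\textg:\Htmotcg\to\Hcom{}$ together with the uniform $L^\infty$ bound on $\mu^{-1/2}$ recorded around \eqref{cmu}, so no additional structural work is required.
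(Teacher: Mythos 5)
Your proposal is correct and follows exactly the paper's own (one-line) argument: choose $T:=\textg(G-\ttrg\tilde{E})$ in \eqref{estimatetheoestone} and then invoke \eqref{extensionineq}. The extra checks you supply — that $G-\ttrg\tilde{E}\in\Htmotcg$ via Remark \ref{traceextensiontwo} and that $c_\gamma$ exists by continuity of $\textg$ together with \eqref{cmu} — are sound and merely make explicit what the paper leaves implicit.
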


\begin{proof}
Setting $T:=\textg(G-\ttrg\tilde{E})$ in \eqref{estimatetheoestone} and using
\eqref{extensionineq} proves \eqref{estcurltrace}.
We note that \eqref{estimatetheoesttwo} follows directly from the corollary as well.
\end{proof}

Lemma \ref{estimatelem} and Corollary \ref{estimatecor} imply the following result.

\begin{theo}
\mylabel{estone}
Let $E,\tilde{E}$ be as in Lemma \ref{estimatelem}. Then
\begin{align}
\begin{split}
&\qquad\normmumoom{\curl(E-\tilde{E})}\\
&\leq\frac{c_p}{\sqrt{c_{\mu}}}\normom{F-\curl Y}+\normmuom{Y-\mu^{-1}\curl\tilde{E}}
+2c_\gamma\norm{G-\ttrg\tilde{E}}_{\Htmotcg},
\end{split}\mylabel{estproone}
\end{align}
where $Y$ is an arbitrary vector field in $\Hcom{}$.
\end{theo}

\begin{proof}
For any $Y\in\Hcom{}$ and any $W\in\Hccom{}$ we have
\begin{align}
-\scpom{\curl Y}{W}+\scpom{Y}{\curl W}&=0.\mylabel{steptwo}
\end{align}
Combining \eqref{stepone} and \eqref{steptwo}, we obtain for all $W\in\Hccdzbotom{}$
\begin{align}
\begin{split}
&\qquad\scpom{\mu^{-1}\curl(E-\tilde{E})}{\curl W}\\
&=\scpom{F-\curl Y}{W}+\scpom{Y-\mu^{-1}\curl\tilde{E}}{\curl W}=\ell_{\tilde{E}}(W).
\end{split}\mylabel{stepthree}
\end{align}
By Cauchy-Schwarz' inequality, Poincar\'e-Friedrich's estimate \eqref{Poincare} and \eqref{cmu}
we estimate the right hand side $\ell_{\tilde{E}}(W)$ of \eqref{stepthree}
\begin{align}
\begin{split}
\left|\scpom{F-\curl Y}{W}\right|&\leq\normom{F-\curl Y}\normom{W}
\leq c_p\normom{F-\curl Y}\normom{\curl W}\\
&\leq\frac{c_p}{\sqrt{c_{\mu}}}\normom{F-\curl Y}\normmumoom{\curl W},
\end{split}\mylabel{estimateone}\\
\left|\scpom{Y-\mu^{-1}\curl\tilde{E}}{\curl W}\right|
&\leq\normmuom{Y-\mu^{-1}\curl\tilde{E}}\normmumoom{\curl W}.
\mylabel{estimatetwo}
\end{align}
Now, Lemma \ref{estimatelem} and Corollary \ref{estimatecor}complete the proof.
\end{proof}

We remark that the latter estimate is unable 
to measure adequately the deviation
of the divergence of $\eps\tilde{E}$ to $0$ (this is obvious since
$\eps\tilde{E}$ even does not need to have any divergence).
On the other hand, even if $\div\eps\tilde{E}\not=0$ then
the semi-norm $\normmumoom{\curl\,\cdot\,}$ could not feel
the lack of the constraint $\div\eps\tilde{E}=0$.
The same holds true for the deviation of $\eps\tilde{E}$
from the orthogonality to the Dirichlet fields.
However, it is not difficult to transform the estimate into a form,
in which the estimate is represented in terms of the semi-norm 
\begin{align}
\tnormom{\Psi}:=\|\curl\Psi\|_{\mu^{-1},\Omega} +\normom{\div\eps\Psi}
+\sum_{n=1}^{d_{D}}\left|\scpom{\eps\Psi}{H_{n}}\right|\mylabel{tnormdef}
\end{align}
on $\Hcdepsom{}$, which obviously is a norm on 
$$\Hccdepsom{}:=\Hccom{}\cap\eps^{-1}\Hdom{}.$$

\begin{rem}
\mylabel{normtrem}
These two facts can be seen applying the Helmholtz-Weyl decomposition
\eqref{helmdecoeps} and the projection $\pi$.
In particular, by \eqref{pipropo} replacing $\tilde{E}$ by $\pi\tilde{E}$
in Theorem \ref{estone} would change nothing.
In other words, the part of $\tilde{E}$ containing the eventually non vanishing divergence term
$(1-\pi)\tilde{E}$ can be added to any term in \eqref{estproone} without changing anything.
To get the 'full' norm \eqref{tnormdef} we just add the terms
$$\normom{\div\eps(E-\tilde{E})}=\normom{\div\eps\tilde{E}}=\normom{\div\eps(1-\pi)\tilde{E}}$$
and the sum of
$$\left|\scpom{\eps(E-\tilde{E})}{H_{n}}\right|
=\left|\scpom{\eps\tilde{E}}{H_{n}}\right|
=\left|\scpom{\eps(1-\pi)\tilde{E}}{H_{n}}\right|.$$
Of course, the terms in the first equalities make sense for $\eps\tilde{E}\in\Hdom{}$ only.
\end{rem}

\begin{theo}
\mylabel{esttwo}
Let $E, \tilde E$ be as in Lemma \ref{estimatelem}
and additionally $\tilde{E}\in\Hcdepsom{}$. Then 
\begin{align}
\begin{split}
\tnormom{E-\tilde{E}}
&\leq M_{+}(\tilde{E};Y)
:=\frac{c_p}{\sqrt{c_{\mu}}}\normom{F-\curl Y}
+\normmuom{Y-\mu^{-1}\curl\tilde{E}}\\
&\qquad+2c_\gamma\norm{G-\ttrg\tilde{E}}_{\Htmotcg}
+\normom{\div\eps\tilde{E}}
+\sum_{n=1}^{d_{D}}\left|\scpom{\eps\tilde{E}}{H_{n}}\right|
\end{split}\mylabel{estprotwo}
\end{align}
holds for any $Y\in\Hcom{}$. 
If $E-\tilde{E}$ even belongs to $\Hccdepsom{}$, i.e. if the
approximation $\tilde{E}$ satisfies the boundary condition exactly, then
$\tnormom{\,\cdot\,}$ is a norm for $E-\tilde{E}$ and we have for all $Y\in\Hcom{}$
\begin{align}
\begin{split}
\tnormom{E-\tilde{E}}
&\leq M_{+}(\tilde{E};Y)
=\frac{c_p}{\sqrt{c_{\mu}}}\normom{F-\curl Y}
+\normmuom{Y-\mu^{-1}\curl\tilde{E}}\\
&\qquad+\normom{\div\eps\tilde{E}}
+\sum_{n=1}^{d_{D}}\left|\scpom{\eps\tilde{E}}{H_{n}}\right|.
\end{split}\mylabel{estprothree}
\end{align}
\end{theo}

\begin{rem}
\mylabel{estonerem}
If $\tilde{E}$ satisfies the prescribed boundary condition
and $\eps\tilde{E}$ is solenoidal and perpendicular to Dirichlet fields,
then \eqref{estproone} or \eqref{estprotwo}, \eqref{estprothree} imply
for all $Y\in\Hcom{}$
\begin{align}
\begin{split}
\tnormom{E-\tilde{E}}&=\normmumoom{\curl(E-\tilde{E})}\\
&\leq M_{+}(\tilde{E};Y)
=\frac{c_p}{\sqrt{c_{\mu}}}\normom{F-\curl Y}
+\normmuom{Y-\mu^{-1}\curl\tilde{E}}
\end{split}\mylabel{estproonem} 
\end{align}
and the left hand side is a norm for $E-\tilde{E}$.
The estimates \eqref{estproone}-\eqref{estproonem} show that deviations from
exact solutions contain weighted residuals
of basic relations with weights given by constants in the corresponding
embedding inequalities.
These are typical features of the so-called functional a posteriori error estimates.
\end{rem}

\begin{rem}
\mylabel{estthreeremthree}
We see that $M_{+}(\tilde{E};Y)=0$, if and only if
$\tilde{E}:=E$ and $Y:=\mu^{-1}\curl E$
(by Lemma \ref{weakcurllem} we then have $Y\in\Hcom{}$).
Moreover, we note that \eqref{estprothree} and \eqref{estproonem} are sharp,
which easily can be seen by setting 
$Y:=\mu^{-1}\curl E\in\Hcom{}$.
In other words, if $\tilde{E}\in\Hcdepsom{}$ 
satisfies the boundary condition exactly then
$$\tnormom{E-\tilde{E}}=\inf_{Y\in\Hcom{}}M_{+}(\tilde{E};Y).$$
\end{rem}

\begin{rem}
\mylabel{estthreeremfour}
In Theorem \ref{estone} and Theorem \ref{esttwo}
we can replace the boundary term on the right hand side
by $2\normmumoom{\curl T}$ or $2\normmumoom{\curl\textg(G-\ttrg\tilde{E})}$
using Lemma \ref{estimatelem} and Corollary \ref{estimatecor}.
Especially for numerical applications the first choice is recommendable.
Hence, we may assume that $G$ is always given by a tangential trace of
some vector field $\check{G}\in\Hcom{}$, i.e. $\ttrg\check{G}=G$.
Then $T:=\check{G}-\tilde{E}$ and we do not have to know the constant $c_\gamma$. 
\end{rem}

\begin{rem}
\mylabel{estthreeremdirichlet}
If the domain is 'simple' in terms of a vanishing second Betti number,
i.e. there are no 'handles', then there exist no Dirichtlet fields.
Thus, for instance, in Theorem \ref{esttwo}
the last summand in the respective estimates does not occur.
\end{rem}

\section{Lower bounds for the error}\mylabel{lowboundsec}

Now, we proceed to derive computable lower bounds of the error.
First, we present the following subsidiary result:

\begin{lem}
\mylabel{weakcurllem}
If $E$ satisfies \eqref{varformu} then $\mu^{-1}\curl E\in\Hcom{}$
and $\curl\mu^{-1}\curl E=F$.
\end{lem}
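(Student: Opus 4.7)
The plan is to unpack the weak--curl definition: showing that $\mu^{-1}\curl E\in\Hcom{}$ with distributional curl equal to $F$ amounts to verifying
$$\scpom{\mu^{-1}\curl E}{\curl W}=\scpom{F}{W}\quad\text{for all }W\in\Hccom{}. \qquad(\star)$$
This is a strengthening of \eqref{varformu}, which supplies the same identity but only for $W$ in the smaller space $\Hccdzbotom{}$. The task is therefore to widen the class of admissible test fields from $\Hccdzbotom{}$ to $\Hccom{}$.

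To bridge this gap I would invoke the $\eps$-orthogonal projection $\pi$ from Remark \ref{helmholtzrem}. Given an arbitrary $W\in\Hccom{}$, I would argue that $\pi W\in\Hccdzbotom{}$: indeed, by \eqref{pipropo} we have $\ttrg\pi W=\ttrg W=0$, so $\pi W\in\Hccom{}$, and by construction the range of $\pi$ lies in $\eps^{-1}\Hdzom{}\cap\harmdieps^{\bot_\eps}$. Plugging $\pi W$ into \eqref{varformu} and using $\curl\pi W=\curl W$ yields
$$\scpom{\mu^{-1}\curl E}{\curl W}=\scpom{\mu^{-1}\curl E}{\curl\pi W}=\scpom{F}{\pi W}.$$

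It then remains to replace $\pi W$ by $W$ on the right-hand side, i.e.\ to show that $F\bot(1-\pi)W$ in the unweighted $\Lt$-inner product. By Remark \ref{helmholtzrem} the remainder satisfies $(1-\pi)W\in\Hcczom{}$. On the other hand, the standing hypothesis $F\in\Hdzbotom{}$ from Theorem \ref{solutiontheo} combined with the final assertion of Lemma \ref{helmholtzlem} gives $F\in\ol{\curl\Hcom{}}$, which in the plain Helmholtz decomposition (the $\eps=1$ specialization of \eqref{helmdecoepsmo}) is precisely the $\Lt$-orthogonal complement of $\Hcczom{}$. Hence $\scpom{F}{(1-\pi)W}=0$, so the right-hand sides agree and $(\star)$ follows, which also shows $\curl(\mu^{-1}\curl E)\in\Hom{}$ with value $F$, i.e.\ $\mu^{-1}\curl E\in\Hcom{}$.

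The only delicate point I anticipate is ensuring that the projection preserves the homogeneous tangential trace, so that $\pi W$ remains in $\Hccom{}$; this is precisely the content of \eqref{pipropo}. Everything else is a direct assembly of the Helmholtz--Weyl machinery already established in Lemma \ref{helmholtzlem} and Remark \ref{helmholtzrem}.
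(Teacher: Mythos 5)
Your proposal is correct and follows essentially the same route as the paper: project an arbitrary test field with $\pi$ into $\Hccdzbotom{}$, apply \eqref{varformu} using $\curl\pi W=\curl W$, and then remove the projection on the right-hand side because $F\in\Hdzbotom{}=\ol{\curl\Hcom{}}$ is $\Lt$-orthogonal to $(1-\pi)W\in\Hcczom{}$. The only immaterial differences are that the paper tests with $\Phi\in\Cizom$ rather than all of $\Hccom{}$ and justifies the final orthogonality by an explicit approximating sequence $\curl F_{n}\to F$ instead of quoting the orthogonality inherent in the Helmholtz--Weyl decomposition.
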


\begin{proof}
We need to show that
\begin{align}
\scpom{\mu^{-1}\curl E}{\curl\Phi}&=\scpom{F}{\Phi}\quad\forall\,\Phi\in\Cizom.\mylabel{varformuci}
\end{align}
Using $\pi$ from Remark \ref{helmholtzrem}, we obtain 
$W=\pi\Phi\in\Hccdzbotom{}$ provided that $\Phi\in\Cizom$.
Thus, by \eqref{varformu} and the fact that $\curl(1-\pi)\Phi=0$, we get
\begin{align}
\scpom{\mu^{-1}\curl E}{\curl\Phi}=\scpom{\mu^{-1}\curl E}{\curl\pi\Phi}
&=\scpom{F}{\pi\Phi}\quad\forall\,\Phi\in\Cizom.\mylabel{varformupici}
\end{align}
Since $F\in\Hdzbotom{}=\ol{\curl\Hcom{}}$, we get (by approximation)
$\scpom{F}{\pi\Phi}=\scpom{F}{\Phi}$ and \eqref{varformuci} follows.
To be more precise, we select $F_{n}\in\Hcom{}$, for which
$(\curl F_{n})_{n\in\nz}$ converges in $\Hom{}$ to $F$, using
$\pi\Phi\in\Hccom{}$ and $\curl(1-\pi)\Phi=0$. Then
$$\scpom{\curl F_{n}}{\pi\Phi}
=\scpom{F_{n}}{\curl\pi\Phi}
=\scpom{F_{n}}{\curl\Phi}
=\scpom{\curl F_{n}}{\Phi}
\quad\forall\,\Phi\in\Cizom.$$
\end{proof}

Lemma \ref{weakcurllem} implies

\begin{rem}
\mylabel{weakcurlrem}
Let $E\in\Hcom{}$ and some $F$ be given. Then the following three assertions are equivalent:
\begin{itemize}
\item[\bf(i)] $\mu^{-1}\curl E\in\Hcom{}$ and $\curl\mu^{-1}\curl E=F$.
\item[\bf(ii)] $F\in\Hom{}$ and
$$\scpom{\mu^{-1}\curl E}{\curl\Phi}=\scpom{F}{\Phi}\quad\forall\,\Phi\in\Hccom{}.$$
\item[\bf(iii)] $F\in\Hdzbotom{}$ and 
$$\scpom{\mu^{-1}\curl E}{\curl\Phi}=\scpom{F}{\Phi}\quad\forall\,\Phi\in\Hccdzbotom{}.$$
\end{itemize}
\end{rem}

\begin{theo}
\mylabel{lowerboundtheo}
Let $\tilde{E}\in\Hcom{}$ be an approximation. Then
$$\normmumoom{\curl(E-\tilde{E})}^2\geq\sup_{W}M_{-}(\tilde{E};W),$$
where 
$$M_{-}(\tilde{E};W):=2\scpom{F}{W}-\scpom{\mu^{-1}\curl(2\tilde{E}+W)}{\curl W}$$
and the supremum is taken over $\Hccom{}$.
This estimate is sharp if $E-\tilde{E}$ belongs to the latter space,
i.e. if the approximation $\tilde{E}$ satisfies the boundary condition exactly.
\end{theo}

\begin{proof}
We start with the obvious identity
\begin{align*}
\normmumoom{\curl(E-\tilde{E})}^2
&=\sup_{Y\in\Hom{}}\left(2\scpom{\mu^{-1}\curl(E-\tilde{E})}{Y}-\normmumoom{Y}^2\right).
\end{align*}
Thus, for all $W\in\Hcom{}$ we obtain the estimate
\begin{align*}
\normmumoom{\curl(E-\tilde{E})}^2
&\geq2\scpom{\mu^{-1}\curl(E-\tilde{E})}{\curl W}-\normmumoom{\curl W}^2\\
&=2\scpom{\mu^{-1}\curl E}{\curl W}-\scpom{\mu^{-1}\curl(2\tilde{E}+W)}{\curl W}.
\end{align*}
Clearly, this estimate is sharp because we can always put $W=E-\tilde{E}$.
However, to exclude the unknown exact solution $E$ from the right hand side
we need $W\in\Hccdzbotom{}$. Then, by \eqref{varformu}
\begin{align}
\scpom{\mu^{-1}\curl E}{\curl W}=\scpom{F}{W}\mylabel{estbelow}
\end{align}
and by Lemma \ref{weakcurllem} \eqref{estbelow} even holds for all $W\in\Hccom{}$.
Thus, for all $W\in\Hccom{}$
\begin{align}
\normmumoom{\curl(E-\tilde{E})}^2
&\geq M_{-}(\tilde{E};W).\mylabel{lowbdmm}
\end{align}
Obviously, this lower bound is sharp if
we can set $W=E-\tilde{E}\in\Hccom{}$.
\end{proof}

The following result is trivial:

\begin{cor}
\mylabel{lowerboundcor}
Let $\tilde{E}\in\Hcdepsom{}$ be an approximation. Then
$$\tnormom{E-\tilde{E}}^2
\geq\sup_{W\in\Hccom{}}M_{-}(\tilde{E};W)
+\normom{\div\eps\tilde{E}}^2
+\sum_{n=1}^{d_{D}}\scpom{\eps\tilde{E}}{H_{n}}^2.$$
\end{cor}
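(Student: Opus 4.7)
The plan is to combine the definition of $\tnormom{\,\cdot\,}$ with the fact that $E$ exactly satisfies \eqref{diveq} and \eqref{orthdi}, and then to invoke Theorem \ref{lowerboundtheo} on the squared $\curl$-seminorm. The whole argument should be short, which matches the author's remark that the corollary is trivial.

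First, I would unpack $\tnormom{E-\tilde{E}}^2$. Since $E\in\Hcdzbotom{}$ satisfies $\div\eps E=0$ and $\eps E\bot\harmdieps$, while $\tilde{E}\in\Hcdepsom{}$ need not satisfy either, for every $n$ one has
\begin{align*}
\div\eps(E-\tilde{E})&=-\div\eps\tilde{E},
&
\scpom{\eps(E-\tilde{E})}{H_{n}}&=-\scpom{\eps\tilde{E}}{H_{n}}.
\end{align*}
Therefore, using the definition \eqref{tnormdef},
$$\tnormom{E-\tilde{E}}=\normmumoom{\curl(E-\tilde{E})}+\normom{\div\eps\tilde{E}}+\sum_{n=1}^{d_{D}}\left|\scpom{\eps\tilde{E}}{H_{n}}\right|.$$

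Second, I would square this identity and apply the elementary inequality $(a_{1}+\cdots+a_{k})^{2}\geq a_{1}^{2}+\cdots+a_{k}^{2}$, valid for non-negative reals (the cross terms on the left are non-negative and can simply be dropped). This yields
$$\tnormom{E-\tilde{E}}^{2}\geq\normmumoom{\curl(E-\tilde{E})}^{2}+\normom{\div\eps\tilde{E}}^{2}+\sum_{n=1}^{d_{D}}\scpom{\eps\tilde{E}}{H_{n}}^{2}.$$

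Finally, Theorem \ref{lowerboundtheo} provides $\normmumoom{\curl(E-\tilde{E})}^{2}\geq\sup_{W\in\Hccom{}}M_{-}(\tilde{E};W)$, and substituting this bound for the first summand on the right-hand side gives the claim. There is no genuine obstacle: the only nontrivial input is Theorem \ref{lowerboundtheo}, and the remaining work consists of recognizing that $E-\tilde{E}$ carries the full information about the violation of \eqref{diveq} and \eqref{orthdi} by $\tilde{E}$, together with the pointwise inequality between a sum squared and the sum of squares.
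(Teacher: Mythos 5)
Your proof is correct and is exactly the argument the paper intends (it omits the proof as ``trivial,'' and the identities $\normom{\div\eps(E-\tilde{E})}=\normom{\div\eps\tilde{E}}$ and $|\scpom{\eps(E-\tilde{E})}{H_{n}}|=|\scpom{\eps\tilde{E}}{H_{n}}|$ you use are already recorded in Remark \ref{normtrem}). Dropping the nonnegative cross terms after squaring and invoking Theorem \ref{lowerboundtheo} for the $\curl$-term is precisely the intended route.
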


\begin{rem}
\mylabel{lowerboundtheorem}
In general, the lower bound is not sharp because
if $E-\tilde{E}\notin\Hccom{}$ 
then we can not put $W=E-\tilde{E}$ anymore.
In fact, with $\mu^{-1}\curl E\in\Hcom{}$ and $\curl\mu^{-1}\curl E=F$
by Lemma \ref{weakcurllem} we get for all $W\in\Hcom{}$
\begin{align}
\scpom{\mu^{-1}\curl E}{\curl W}
=\scpom{F}{W}+\scpg{\tttrg\mu^{-1}\curl E}{\ttrg W}.\mylabel{intpart}
\end{align}
Here, we introduced a second tangential trace $\tttrg$,
called the normal trace in terms of differential forms,
mapping again $\Hcom{}$ to special tangential vector fields on the boundary as well, i.e.
$$\Htmotdg:=\Set{\psi\in\Hgen{-1/2}{t}{}(\gamma)}{\divs\psi\in\Hgen{-1/2}{}{}(\gamma)},$$
where $\divs$ denotes the surface divergence.
In \cite{buffaciarlet,buffacostabelsheen} and, more general,
in \cite{wecklip}, see also \cite[Lemma 3.7, $q=1$]{paulyrossi},
it has been pointed out that even for Lipschitz domains
the integration by parts formula \eqref{intpart} remains valid
in some sophisticated sense.
For smooth vector fields we have $\tttrg=-n\times(n\times\,\cdot\,)|_{\gamma}$.
Hence, we obtain the estimate
$$\normmumoom{\curl(E-\tilde{E})}^2
\geq M_{-}(\tilde{E};W)
+2\scpg{\tttrg\mu^{-1}\curl E}{\ttrg W}$$
for all $W\in\Hcom{}$, which is sharp and coincides with \eqref{lowbdmm}
if $W\in\Hccom{}$.
But the unknown exact solution $E$
still appears on the right-hand side,
i.e. the second tangential trace of $\mu^{-1}\curl E$ on $\gamma$.
Furthermore, if the term $\scpg{\tttrg\mu^{-1}\curl E}{\trg W}$
is positive then \eqref{estbelow} can not be sharp.
\end{rem}

\begin{acknow}
The authors express their gratitude
to the Department of Mathematical Information Technology
of the University of Jyv\"askyl\"a (Finland)
for financial support.
\end{acknow}

\addressesdirksergey

\end{document}